\DeclareMathOperator{\cof}{\cof}
\theoremstyle{plain}
\theoremstyle{plain}
\newtheorem{thm}{Theorem}
\newtheorem{lemma}[thm]{Lemma}
\newtheorem{cor}[thm]{Corollary}
\newtheorem{conj}[thm]{Conjecture}
\newtheorem{prop}[thm]{Proposition}
\newtheorem{qn}[thm]{Question}
\theoremstyle{definition}
\newtheorem{defn}[thm]{Definition}
\theoremstyle{remark}
\numberwithin{equation}{section}
\numberwithin{thm}{section}
\newtheorem*{remark}{Remark}
\numberwithin{equation}{section}
\numberwithin{thm}{section}
\newcommand{\R}{\mathbb{R}}
\newcommand{\C}{\mathbb{C}}
\newcommand{\N}{\mathbb{N}}
\newcommand{\be}{\begin{equation}}
\newcommand{\ee}{\end{equation}}
\def \Al{0.4 pt}
\def \Size{40 pt}
\begin{document}

\title[Conformally Removable Subsets]{Every Planar Set has a Conformally Removable Subset with the Same Hausdorff Dimension}
\date{\today}

\author[Drillick]{Hindy Drillick}
\address[Hindy Drillick]{
     Columbia University \\ New York, NY, 10027, USA.}
\email{hdrillick@math.columbia.edu}

\subjclass[2010]{30C35.} 
\keywords{Conformal removability, Hausdorff dimension}

\maketitle

\begin{abstract}
In this paper we show that given any compact set $E \subset \hat{\C}$, we can always find a conformally removable subset with the same Hausdorff dimension as $E$. 

\end{abstract}

\section{Introduction}

We begin with defining conformal removability. Recall that a function between planar domains is conformal if it is holomorphic and invertible. Let $\hat{\C}$ denote the Riemann Sphere.

\begin{defn}[Conformal Removability] A compact set $E \subset \hat{\C}$ is conformally removable if for all homeomorphisms $f: \hat{\C} \to \hat{\C}$, if $f$ is conformal on $\hat{\C} \backslash E$, then $f$ is conformal on $\hat{\C}$.
\end{defn}

Note that $E$ is removable if and only if all homeomorphisms of $\hat{\C}$ that are conformal off of $E$ are actually M\"{o}bius transformations since these are precisely the conformal homeomorphisms of the Riemann sphere. See \cite{MR3429163} for a discussion of notions of removability for other classes of mappings, including removability for conformal maps that are not necessarily homeomorphisms of $\hat{\C}$.

The simplest example of a conformally removable set is a finite collection of points. Finite collections of points have zero measure, and in fact all conformally removable sets must have zero measure (see the remark after Proposition 4.4 in \cite{MR3429163}). However, it is possible to construct conformally removable sets that are large in the sense of Hausdorff dimension. Indeed, it is well known that conformally removable sets of any Hausdorff dimension in the interval $[0,2]$ exist. We will give an explicit construction of such sets and use them to prove the following theorem:

\begin{thm}[Main Theorem] \thlabel{main1}
Given an arbitrary compact set $E \subset \hat{\C}$, there exists a compact set $E' \subset E$ that is conformally removable such that $\dim_H E' = \dim_H E$.
\end{thm}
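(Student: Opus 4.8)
The plan is to construct, inside $E$, a single Cantor-type subset of the correct dimension that can be shown removable directly; a union-based argument is deliberately avoided, since it is a well-known open problem whether the union of two conformally removable sets is removable, so one cannot simply glue together countably many removable pieces. First I would dispose of the trivial cases: if $E=\emptyset$ there is nothing to prove, and if $d:=\dim_H E=0$ then any single point $x\in E$ gives $E'=\{x\}$, which is removable with $\dim_H E'=0=d$. So assume $0<d\le 2$.

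The engine of the construction is Frostman's lemma together with a selection argument. For a target exponent $s<d$, Frostman's lemma produces a probability measure $\mu$ supported on $E$ with $\mu(B(x,r))\le C r^{s}$ for all $x$ and $r$. At a scale $\delta$, partitioning a region of positive $\mu$-mass into a grid of side-$\delta$ squares, the mass bound forces at least $\gtrsim \delta^{-s}$ of them to carry comparable mass; passing to a subfamily whose squares are pairwise non-adjacent costs only a bounded factor in the count but creates definite gaps. Iterating this selection through a decreasing sequence of scales $\delta_1>\delta_2>\cdots$ yields a nested Moran family whose intersection $K\subseteq E$ is uniformly porous, hence removable by a standard sufficient condition for conformal removability, while the canonical Moran measure together with the mass distribution principle gives $\dim_H K\ge s$.

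To upgrade the bound $\dim_H K\ge d-\varepsilon$ to the exact value $d$ within a single set, I would run the selection with a slowly improving target: fix $\varepsilon_k\downarrow 0$ and, over the $k$-th block of generations, apply Frostman's lemma at exponent $d-\varepsilon_k$ and select $\approx \delta^{-(d-\varepsilon_k)}$ separated mass-carrying squares (abundant at every scale in every region of positive mass because $\dim_H E=d$). Choosing the generation counts $N_k$ and scales $\delta_k$ so that the Moran dimension formula $\liminf_k \frac{\log N_k}{-\log \delta_k}$ evaluates to exactly $d$ produces a compact set $E'\subseteq E$ with $\dim_H E'\ge d$; since $E'\subseteq E$ forces $\dim_H E'\le d$, equality holds.

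The main obstacle is the interplay between high dimension and removability. Uniform porosity forces $\dim_H E'<2$, so as $d\to 2$ (and at $d=2$ itself) the selected squares must pack almost space-fillingly and the porosity degenerates. The crux is therefore the removability of the resulting Cantor dusts in this degenerate regime: one must prove removability under a quantitative, scale-by-scale gap condition with the relative gaps allowed to shrink (subject to a controlled-rate constraint) rather than staying a fixed fraction of the cell size. This is precisely the content of the explicit removable-dust construction promised in the introduction, and verifying its removability --- most delicately at the endpoint $d=2$, where $E'$ is a dimension-two set of measure zero --- is where the real work lies; the remaining dimension bookkeeping is routine.
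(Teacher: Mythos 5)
There is a genuine gap, and it sits exactly where you placed the ``real work'': the removability of the limiting dust. To make the Moran dimension come out to \emph{exactly} $d$ (rather than $\geq d-\varepsilon$), your selection scheme is forced to use subdivision ratios $M_k\to\infty$: the non-adjacency requirement and the Frostman constants cost a fixed factor per generation, so the running ratio $\log(\prod_j N_j)/(-\log\delta_k)$ can converge to $d$ only if the per-generation loss $C/\log M_k$ tends to zero. But once $M_k\to\infty$, the complement of the resulting dust contains corridors of width $\sim\delta_k$ and length $\sim M_k\delta_k$, so the John constant degenerates, and the set is not uniformly porous either (you concede this). At that point there is no ``standard sufficient condition'' left to invoke: Jones's criterion (\thref{John-Domain-Removable}), the Jones--Smirnov criterion for H\"older domains, and porosity-type conditions all force the set to have dimension bounded strictly below $2$ in terms of the relevant constant, and none of them tolerates degenerating gaps. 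Your claim that the needed statement is ``precisely the content of the explicit removable-dust construction promised in the introduction'' is a misreading of the paper: the dusts $C_\alpha$ of Section \ref{Section:John} have \emph{fixed} relative gaps, their removability comes from \thref{Cantor-is-John} and \thref{John-Domain-Removable}, their dimensions exhaust only the open interval $(0,2)$, and the paper explicitly obtains a dimension-two removable set only as an \emph{output} of the main theorem, never as a single dust. So the theorem your proof defers to does not exist in the paper, and proving it would require going beyond every criterion the paper (or the standard literature) supplies.

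The paper's way around this obstacle is exactly the union-based route you rejected, and your stated reason for rejecting it does not apply. The paper never glues arbitrary removable sets (that is indeed open); it places countably many \emph{fixed-gap} dusts $G'_{2n}$, of dimensions $b_{2n}\uparrow 2$, into pairwise disjoint annuli around a point $p\in E$ of full local dimension (\thref{annuli}), uses Mattila's intersection theorem (\thref{Mattila}) to position each one so that $\dim_H(E\cap G'_{2n})\geq d_{2n}+b_{2n}-2\to d$, and then applies Younsi's locality characterization (\thref{locality}): because the pieces have pairwise disjoint neighborhoods, any homeomorphism of $\hat{\C}$ conformal off the union is conformal near each piece, hence conformal off the single accumulation point $p$, which is removable. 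Well-separated unions accumulating at one point are precisely the case the locality lemma handles, so the open union problem is no obstruction here. In fact, your Frostman/Moran selection is salvageable and even attractive: for each fixed $s<d$ it does produce a John-boundary (hence removable) subset of $E$ of dimension $\geq s$, built \emph{inside} $E$ with no intersection theory. If you ran it with exponents $d_{2n}\uparrow d$ inside the disjoint annuli of \thref{annuli} and then invoked the separated-union-plus-locality argument, you would get a correct proof that is arguably more self-contained than the paper's, since it avoids \thref{Mattila} entirely. As written, however --- insisting on a single dust of exact dimension $d$ --- the proposal rests on a removability theorem that is unproven and, given the dimension bounds attached to all known domain-type criteria, cannot be supplied by any of them.
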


\textbf{Acknowledgements:} This work was done as part of my senior thesis at Stony Brook University. I would like to thank Professor Christopher Bishop and Dimitrios Ntalampekos for suggesting this problem and supervising my thesis. I would like to thank Malik Younsi for his helpful comments.

\section{John Domains} \label{Section:John}
In this section we construct conformally removable Cantor sets of any desired Hausdorff dimension in the open interval $(0,2)$. We start by recalling the definition of a John domain and a theorem by Peter Jones giving a sufficient condition for conformal removability.

\begin{defn}[John domain]
A domain $\Omega \subset \hat{\C}$ is a John domain with center $z_0$ if there exists $\varepsilon > 0$ such that for all $z_1 \in \Omega$ there exists a path $\gamma: [0,1] \to \Omega$ with $\gamma(0) = z_0$ and  $\gamma(1) = z_1$ satisfying the following: for all $t \in [0,1] $,
\be \label{john-general}
d(\gamma(t), \partial \Omega) \geq \varepsilon d(\gamma(t), z_1),
\ee
where $d$ is the chordal distance.
\end{defn}

\begin{thm}[Jones \cite{MR1315551}, Corollary 1] \thlabel{John-Domain-Removable}
The boundary of a John domain $\Omega \subset \hat{\C}$ is conformally removable.
\end{thm}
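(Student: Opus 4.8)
The plan is to deduce conformal removability from Sobolev removability: I will show that any homeomorphism $f\colon\hat{\C}\to\hat{\C}$ that is conformal on $\hat{\C}\setminus E$, where $E=\partial\Omega$, actually lies in $W^{1,2}_{\mathrm{loc}}(\hat{\C})$, and then close the argument with Weyl's lemma. The starting observation is that $\hat{\C}\setminus E$ is open and $f$ is holomorphic and injective on each of its components $\Omega_i$; by the change-of-variables formula for conformal maps,
\be
\int_{\hat{\C}\setminus E}|f'|^2\,dA \;=\;\sum_i \mathrm{Area}\big(f(\Omega_i)\big)\;\le\;\mathrm{Area}(\hat{\C})\;<\;\infty,
\ee
so the classical derivative of $f$, defined off $E$, already belongs to $L^2$. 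If this classical derivative is also the distributional one, equivalently if $f\in W^{1,2}_{\mathrm{loc}}(\hat{\C})$, then the distributional $\bar\partial f$ is an $L^2$ function vanishing almost everywhere on the open set $\hat{\C}\setminus E$; since the estimates below will also show that $E$ is a null set, $\bar\partial f=0$ as a distribution, and Weyl's lemma forces $f$ to be holomorphic on all of $\hat{\C}$. A holomorphic self-homeomorphism of the sphere is a M\"obius transformation, so $f$ is conformal on $\hat{\C}$ and $E$ is removable. Thus everything reduces to the regularity statement $f\in W^{1,2}_{\mathrm{loc}}(\hat{\C})$.

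To prove this regularity I would use the ACL (absolute continuity on lines) characterization of $W^{1,2}$: it suffices to show that for almost every horizontal and every vertical line $\ell$, the restriction $f|_\ell$ is absolutely continuous with derivative in $L^2$. On the open intervals of $\ell\setminus E$ this is automatic from Fubini together with the conformality of $f$ on each $\Omega_i$, so the entire difficulty is to control $f$ across the closed set $\ell\cap E$ and to exclude any increment of $f$ concentrated there. I would fix a Whitney decomposition $\{Q_j\}$ of $\Omega$ and use the continuity of $f$ to express the increment of $f$ across a boundary gap as a limit of sums of oscillations $\mathrm{osc}_{Q_j}f=\mathrm{diam}\,f(Q_j)$ over boxes accumulating at $E$. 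It is worth noting that only $\Omega$ itself is assumed John: because $f$ is continuous, its boundary values on $E$ can be read off from the $\Omega$-side alone, so the complementary components contribute to the $L^2$ bound but need no geometric control. The basic per-box estimate is the Koebe distortion bound: since $f$ is conformal on the double of each Whitney box, $\mathrm{osc}_{Q_j}f \approx \|f'\|_{L^2(Q_j)}$.

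The heart of the matter, and the step I expect to be the main obstacle, is to sum these oscillations and show they are controlled on almost every line; this is exactly where the John hypothesis enters. Parametrizing the horizontal lines $\ell_y$ by height $y$ and integrating simultaneously, the total oscillation contributed by the boundary boxes satisfies
\be
\int \Big(\sum_{j:\,Q_j\cap\ell_y\neq\emptyset}\mathrm{osc}_{Q_j}f\Big)\,dy \;\lesssim\;\sum_j (\mathrm{osc}_{Q_j}f)\,\mathrm{diam}\,Q_j \;\le\;\Big(\sum_j (\mathrm{osc}_{Q_j}f)^2\Big)^{1/2}\Big(\sum_j (\mathrm{diam}\,Q_j)^2\Big)^{1/2},
\ee
and by the Koebe estimate with the bounded overlap of Whitney boxes the first factor is $\lesssim \|f'\|_{L^2(\Omega)}$, while the second is $\lesssim \mathrm{Area}(\Omega)^{1/2}$; both are finite. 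Hence the inner sum is finite for almost every $y$, which both forces $E$ to be null and bounds the variation of $f|_{\ell_y}$ across $\ell_y\cap E$.

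The genuinely delicate point is to upgrade this finiteness into true absolute continuity of $f|_\ell$, that is, to rule out a singular-continuous, Cantor-type increment supported on the measure-zero set $\ell\cap E$. This requires that every boundary point be reached through a connected chain of Whitney boxes of geometrically decaying diameter and bounded turning, so that the increment of $f$ across any boundary gap is genuinely captured, and squeezed to zero, by the oscillations of such a chain. The John condition supplies exactly these ``carrot'' chains: the John curve from the center $z_0$ to a boundary point runs through a chain of boxes whose sizes are comparable to their distance along the curve, furnishing the comparison geometry that converts the oscillation sum into an absolutely continuous majorant. Making the chaining precise, handling the bounded-overlap bookkeeping of the Whitney boxes, and quantifying the decay along the John carrots so that the Cauchy--Schwarz sum converges uniformly enough to eliminate the singular part, is the technical core of Jones's argument and the portion demanding the most care.
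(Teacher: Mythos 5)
The paper offers no proof of this statement at all: it is imported as a black box from Jones's paper (Corollary 1 of \cite{MR1315551}), so the comparison must be against Jones's own argument rather than anything internal. Measured that way, your outline is a faithful reconstruction of Jones's strategy: reduce conformal removability to $W^{1,2}$-removability plus Weyl's lemma, get the global $L^2$ bound on $f'$ from injectivity and the finite area of the sphere, and attack the ACL property line by line using Koebe distortion on Whitney boxes, bounded overlap, Cauchy--Schwarz, and Fubini. Those steps are all sound (modulo a routine normalization at $\infty$ so that $|f'|^2$ is locally integrable in the chart you work in). But the proposal has a genuine gap, and it sits exactly at the decisive point.

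Two concrete problems. First, the claim that finiteness of $\sum_{j:\,Q_j\cap\ell_y\neq\emptyset}\mathrm{osc}_{Q_j}f$ for a.e.\ $y$ ``forces $E$ to be null'' is unjustified, and the same finiteness does not by itself bound the variation of $f|_{\ell_y}$ across $\ell_y\cap E$: an increment $|f(b)-f(a)|$ with $a,b\in\ell_y\cap E$ is recovered through chains of Whitney boxes that have no reason to intersect $\ell_y$, so your displayed estimate controls $f$ on $\ell_y\setminus E$ only. (Nullity of $E$ should instead be extracted from the geometry: the John condition yields interior corkscrew balls at every boundary point and scale, so $\partial\Omega$ is porous, hence of zero area, and Fubini gives $\ell_y\cap E$ length zero for a.e.\ line.) Second---and you say so yourself---the exclusion of a singular-continuous, Cantor-function-type increment supported on $\ell\cap E$ is deferred to the final paragraph and never executed. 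Note that everything you actually carry out (the $L^2$ bound, Koebe on Whitney boxes, Cauchy--Schwarz, Fubini) is valid for an \emph{arbitrary} open set with compact boundary, yet the theorem is false at that generality: a compact set of positive area is the boundary of its complement and is non-removable (solve a Beltrami equation with dilatation supported on it, as in the standard argument the paper alludes to when noting removable sets have zero measure). So the John hypothesis is never used in the argument you have written down; the carrot-chain/shadow estimate you describe qualitatively---boundary increments squeezed by chains of boxes whose shadows have diameter comparable to the box---is not a technical footnote but \emph{is} Jones's theorem. As a road map your proposal is accurate and well organized; as a blind proof it is missing its core step.
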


We show that certain self similar Cantor sets in the plane are boundaries of John domains and hence are conformally removable. Given $0< \alpha < \frac{1}{2}$, let $C_\alpha$ be the following Cantor set. Let $A_0 = I \times I$ be the unit square. Given a collection of squares $A_n$ of side-length $\alpha^n$, form $A_{n+1}$ by replacing each square by four congruent subsquares of side length $\alpha^{n+1}$ as shown in Figure \ref{fig:Cantor}. Define \be C_\alpha = \bigcap_{n=0}^{\infty} A_n. \ee

\begin{figure}
 \hspace*{-2cm}     
  \begin{tikzpicture}[scale = 2]
    \fill[lindenmayer system={Cantor dust,axiom=S,step=36pt,order=0}, opacity = .3]lindenmayer system;
    \begin{scope}[shift={(2,0)}]
       \fill[lindenmayer system={Cantor dust,axiom=S,step=12pt,order=1}, opacity = .3]lindenmayer system;
    \end{scope}
    \begin{scope}[shift={(0,-2)}]
       \fill[lindenmayer system={Cantor dust,axiom=S,step = 4pt,order=2}, opacity = .3]lindenmayer system;
    \end{scope}
    \begin{scope}[shift={(2,-2)}]
       \fill[lindenmayer system={Cantor dust,axiom=S,step=4/3pt,order=3}, opacity = .3]lindenmayer system;
    \end{scope}
\end{tikzpicture}
    \caption{The first four iterations of the construction of $C_\alpha$}
    \label{fig:Cantor}
\end{figure}
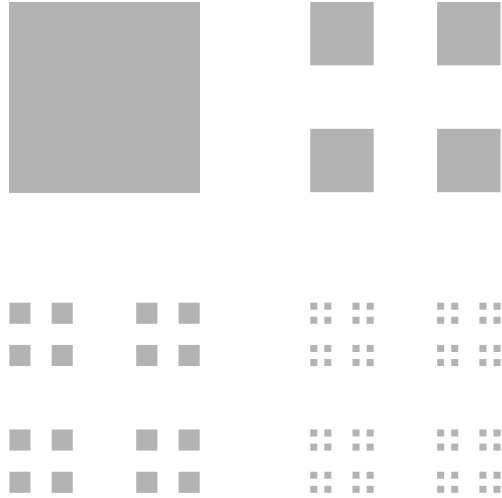

\begin{prop} \thlabel{positive-Hausdorff-measure}
The Hausdorff dimension of $C_\alpha$ is $d = -\frac{\log4}{\log\alpha}.$ Furthermore, \be \mathscr{H}^{d}(C_\alpha) > 0, \ee where $\mathscr{H}^d$ is the $d$-dimensional Hausdorff measure. 
\end{prop}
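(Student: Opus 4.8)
The plan is to recognize $C_\alpha$ as the attractor of an iterated function system: it is the invariant set of the four similarities of contraction ratio $\alpha$ that send the unit square $A_0$ to its four corner subsquares of side $\alpha$. Since $\alpha < \tfrac12$, these four images have pairwise disjoint interiors, so the system satisfies the open set condition and the exponent $d = -\log 4/\log\alpha$ is exactly the Moran exponent solving $4\alpha^{d}=1$. One could quote Hutchinson's theorem to get both assertions at once, but the set is concrete enough that I would instead establish the two inequalities $\dim_H C_\alpha \le d$ and $\mathscr H^{d}(C_\alpha)>0$ by hand, which also pins down that $d\in(0,2)$ because $0<\alpha<\tfrac12$.

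For the upper bound I would use the natural covers coming from the construction. At stage $n$ the set $A_n$ is a union of $4^{n}$ squares of side $\alpha^{n}$, hence of diameter $\sqrt2\,\alpha^{n}$, and this family covers $C_\alpha$. For any exponent $s$ the associated sum of $s$-th powers of diameters is $4^{n}(\sqrt2\,\alpha^{n})^{s} = 2^{s/2}(4\alpha^{s})^{n}$. Choosing $s=d$ makes $4\alpha^{d}=1$, so the sum stays bounded by $2^{d/2}$ while the mesh $\sqrt2\,\alpha^{n}\to 0$; this gives $\mathscr H^{d}(C_\alpha)\le 2^{d/2}<\infty$. For $s>d$ the ratio $4\alpha^{s}<1$ drives the sum to $0$, so $\dim_H C_\alpha \le d$.

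The substantive content is the lower bound $\mathscr H^{d}(C_\alpha)>0$, which I would derive from the mass distribution principle. Let $\mu$ be the natural probability measure on $C_\alpha$ that assigns mass $4^{-n}$ to each of the $4^{n}$ squares of $A_n$ (equivalently, the push-forward of the uniform Bernoulli measure under the coding map). It suffices to produce a constant $C$ with $\mu(B(x,r))\le C r^{d}$ for all $x$ and all small $r$. Given $r$, pick $n$ with $\alpha^{n+1}\le r<\alpha^{n}$. If a level-$n$ square $Q$ meets $B(x,r)$, then every point of $Q$ lies within $r+\sqrt2\,\alpha^{n}<(1+\sqrt2)\alpha^{n}$ of $x$, so all such squares sit inside $B\!\left(x,(1+\sqrt2)\alpha^{n}\right)$; since they have disjoint interiors and each has area $\alpha^{2n}$, at most $N:=\pi(1+\sqrt2)^{2}$ of them can occur. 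Hence $\mu(B(x,r))\le N\,4^{-n}$, and because $4^{-n}=\alpha^{nd}\le\alpha^{-d}r^{d}$ by the choice of $n$, we get $\mu(B(x,r))\le N\alpha^{-d}r^{d}$. The mass distribution principle then yields $\mathscr H^{d}(C_\alpha)\ge \mu(C_\alpha)/(N\alpha^{-d})>0$.

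The only delicate point is the density estimate in the last paragraph, namely matching the scale $r$ to the correct generation $n$ and bounding uniformly the number of level-$n$ squares that a ball of radius $\approx\alpha^{n}$ can meet; the disjointness of those squares (guaranteed by $\alpha<\tfrac12$) makes an area comparison do the counting, after which everything reduces to the geometric-sum bookkeeping above.
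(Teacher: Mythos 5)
Your proof is correct, and it is essentially the argument the paper relies on: the paper gives no inline proof but defers to Example 1.2.9 and Lemma 1.2.8 of Bishop--Peres, which establish exactly this combination of the natural-cover upper bound and the mass distribution principle applied to the uniform Bernoulli measure. Your write-up simply makes that cited argument self-contained, with the scale-matching and square-counting details ($\alpha^{n+1}\le r<\alpha^n$, area comparison inside $B(x,(1+\sqrt2)\alpha^n)$) carried out correctly.
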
 

For a proof see Example 1.2.9 and Lemma 1.2.8 in \cite{MR3616046}. Since we can choose any $\alpha \in (0, \frac{1}{2})$, this construction gives us Cantor sets with all Hausdorff dimensions in the open interval $(0,2)$, but gives no set with dimension equal to zero or two.

\begin{prop} \thlabel{Cantor-is-John}
$\Omega = \hat{\C} \backslash C_\alpha$ is a John domain.
\end{prop}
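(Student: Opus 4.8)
The plan is to exploit the exact self-similarity of $C_\alpha$ together with the fact that at every scale the four subsquares are separated by a cross-shaped gap of definite relative width $(1-2\alpha)>0$. Fix the center $z_0$ to be the center of the unit square $A_0$; since $\alpha<\tfrac12$, the point $z_0$ lies in the central cross-gap of $A_0$ and hence in $\Omega$. I will construct, for each $z_1\in\Omega$, an explicit path to $z_0$ and verify the John condition \raf{john-general} with a constant $\varepsilon$ depending only on $\alpha$.

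First I would dispose of the easy case $z_1\notin A_0$: the exterior of the unit square is a ``fat'' region that connects to the level-$0$ cross, and any such $z_1$ can be joined to $z_0$ through it with \raf{john-general} satisfied for a fixed $\varepsilon$. So assume $z_1\in A_0\cap\Omega$. Since $z_1\notin C_\alpha=\bigcap_n A_n$, there is a smallest $n$ with $z_1\notin A_n$; then $z_1$ lies in a square $Q$ of generation $n-1$ (side length $\alpha^{n-1}$) but in the central cross-gap $P_Q$ created when $Q$ is subdivided. Let $c_k$ denote the center of the generation-$k$ square containing $z_1$, so $c_0=z_0$ and $c_{n-1}$ is the center of $Q$. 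I would take the path $\gamma$ to run $z_1\to c_{n-1}\to c_{n-2}\to\cdots\to c_1\to c_0$, where each leg from $c_k$ to $c_{k-1}$ is routed through an arm of the generation-$k$ cross into the wider generation-$(k-1)$ cross, and the first leg from $z_1$ to $c_{n-1}$ stays inside $P_Q$. Because each cross-gap connects to its parent's cross-gap through its arms, this polygonal path stays in $\Omega$.

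The verification of \raf{john-general} splits into two scale-invariant estimates. Along the leg from $c_k$ to $c_{k-1}$ the path travels outward from the generation-$k$ cross into the wider generation-$(k-1)$ cross, so the channel width, and hence $d(\gamma(t),C_\alpha)$, grows from order $\alpha^{k}$ to order $\alpha^{k-1}$, while the arclength from $z_1$ likewise grows from order $\alpha^{k}$ to order $\alpha^{k-1}$; since both quantities are comparable throughout the leg, the ratio $d(\gamma(t),C_\alpha)/d(\gamma(t),z_1)$ is bounded below by a constant, and by self-similarity every generation-$k$ configuration is an exact $\alpha^{k}$-rescaling of generation $0$, so this constant is independent of $k$. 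For the first leg I would use that the fixed plus-shaped region is itself a John domain with center at its midpoint: routing $z_1\to c_{n-1}$ inside $P_Q$ gives $d(\gamma(t),\partial P_Q)\ge \varepsilon' d(\gamma(t),z_1)$, and since for $\gamma(t)\in P_Q$ the nearest point of $C_\alpha$ lies on one of the four subsquares bounding $P_Q$, we get $d(\gamma(t),C_\alpha)=d(\gamma(t),\text{subsquares})\ge d(\gamma(t),\partial P_Q)\ge\varepsilon' d(\gamma(t),z_1)$. Taking $\varepsilon$ to be the smaller of the two constants then completes the proof.

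The main obstacle I anticipate is making the two regimes match with a single uniform constant: I must check that the nearest point of $C_\alpha$ to $\gamma(t)$ really is a subsquare of the current-generation square, so that $d(\gamma(t),C_\alpha)$ is comparable to the local channel width rather than to something smaller contributed by a sibling or an ancestor, and that no degradation of the constant occurs at the junctions $c_k$ where consecutive legs meet. Both reduce, via self-similarity, to a finite geometric computation on the single plus-shaped gap, so I expect them to go through, but they are where the care is needed.
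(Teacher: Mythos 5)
Your proposal is correct and takes essentially the same approach as the paper: both decompose $\Omega \cap A_0$ into self-similar, generation-indexed gap regions and build paths that ascend one generation at a time, with the John constant uniform in the generation by self-similarity (the paper's ``induction and the geometric formula'' step). The only real difference is cosmetic --- you place the John center at the center of the unit square and route paths through the cross-gaps, while the paper takes $z_0=\infty$ and uses shells between slightly enlarged concentric squares (which makes the exterior case trivial) --- and this does not change the substance of the argument.
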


\begin{proof}
This follows from Theorem 4.2 in \cite{MR1953266}. We also give a brief sketch of a direct proof here. We can let $z_0 = \infty$. All points $z_1$ outside the unit square trivially satisfy \eqref{john-general}. For points $z_1$ inside the unit square it suffices to construct paths satisfying \eqref{john-general} from base-points on the boundary of a larger square concentric to the unit square, which can then easily be connected to $z_0$. Note that due to the equivalence of the chordal and Euclidean metrics in any bounded region, we can now replace the chordal distances in condition \eqref{john-general} with Euclidean distances. 

Let $Q \subset A_n$ be an $n$th generational square in the construction of $C_\alpha$ with width $\alpha^n$. Let $\gamma_Q$ be the boundary of the concentric square of width $\alpha^n +\frac{\alpha^{n-1}(1-2\alpha)}{2}$ (this extra bit is half of the distance between $Q$ and its neighboring squares). Let $R_Q$ denote the closed region between $\gamma_Q$ and the four curves $\gamma_{P_i}$ contained inside $Q$ that correspond to the four children squares $P_i$ of $Q$. $R_Q$ has the property that any point inside it has distance comparable to $\alpha^n$ from $C_\alpha$ (more explicitly, this distance is greater than $\frac{\alpha^{n}(1-2\alpha)}{2}$) and can be connected to $\gamma_Q$ by a curve in $R_Q$ of length comparable to $\alpha^n$. Using induction and the geometric formula, we can connect any point in $R_Q$ to a point on $\gamma_{I \times I}$ by a path satisfying the John condition. Every point in $(I \times I) \setminus C_\alpha$ belongs to some $R_Q$, so we are done.
\end{proof}

\begin{cor} \thlabel{Cantor-removable}
$C_\alpha$ is conformally removable.
\end{cor}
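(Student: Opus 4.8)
The plan is to deduce this directly from the two preceding results: Jones' theorem (\thref{John-Domain-Removable}), which gives conformal removability of the boundary of any John domain, together with \thref{Cantor-is-John}, which shows that $\Omega = \hat{\C}\setminus C_\alpha$ is a John domain. The only point that requires any argument is the purely topological observation that $C_\alpha$ is exactly the boundary $\partial\Omega$ of this domain; once that identification is in place, the corollary is immediate.

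First I would verify that $C_\alpha$ has empty interior. Since $C_\alpha \subset A_n$ and $A_n$ is a union of $4^n$ closed squares of side length $\alpha^n$, any connected subset of $C_\alpha$ has diameter at most $\sqrt{2}\,\alpha^n$ for every $n$, hence is a single point; in particular $C_\alpha$ is totally disconnected and contains no open set. Because $C_\alpha$ is closed with empty interior, every one of its points is a limit of points of the complement $\Omega$, so $C_\alpha \subseteq \overline{\Omega}$, and therefore
\[
\partial\Omega = \overline{\Omega}\cap\overline{\hat{\C}\setminus\Omega} = \overline{\Omega}\cap C_\alpha = C_\alpha .
\]

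With this identification in hand I would finish by invoking Jones' theorem: by \thref{Cantor-is-John} the domain $\Omega$ is John, so by \thref{John-Domain-Removable} its boundary $\partial\Omega = C_\alpha$ is conformally removable. There is no genuine obstacle here --- all of the analytic content lives in Jones' theorem and in the verification of the John condition carried out in \thref{Cantor-is-John} --- so the corollary amounts to recording that the Cantor set coincides with the boundary of its complementary domain and then quoting the two results already established.
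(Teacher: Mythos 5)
Your proof is correct and takes essentially the same route as the paper, whose entire proof is to invoke Jones' theorem (\thref{John-Domain-Removable}) in combination with \thref{Cantor-is-John}. Your extra step verifying that $C_\alpha = \partial\Omega$ (via total disconnectedness and empty interior) is a detail the paper leaves implicit, and you carry it out correctly.
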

\begin{proof}
This follows immediately from \thref{John-Domain-Removable}.
\end{proof}

The sets $C_\alpha$ give us conformally removable sets of all Hausdorff dimensions in $(0, 2).$ A point is an example of a conformally removable set with Hausdorff dimension zero. An example of a conformally removable set of Hausdorff dimension two will be given by \thref{main1} in the case where $\dim_H E = 2.$

\section{Dimensions of Intersections}
We are interested in proving that every planar compact set $E$ contains a removable subset $E'$ of the same Hausdorff dimension as itself. We have already constructed conformally removable Cantor sets of arbitrary Hausdorff dimensions in $(0,2)$. To construct $E'$ we will position these sets in such a way that their intersection with $E$ still has large Hausdorff dimension.

When two manifolds $M^k$ and $N^l$ intersect transversally in an $n$-dimensional ambient space, we know that their intersection is a submanifold with dimension $k+l-n$. The following theorem by Mattila gives an analogous formula for the Hausdorff dimension of the intersection of two fractals. 

Let $A, B \subset \R^n$ be Borel sets. We want to know what can be said about the intersection of $A$ with the images of $B$ under different isometries. More formally, let $G$ be the group of isometries. Every transformation $\sigma \in G$ can be decomposed into a translation followed by an orthogonal transformation. Let $\tau_z$ denote the translation by $z \in \R^n$, and let $g \in O(n)$ be the orthogonal transformation, where $O(n)$ is the $n$-dimensional orthogonal group. The space of translations is just $\R^n$ and can be given the standard Lebesgue measure $\mathcal{L}^n$. $O(n)$ is a compact group and we will denote its Haar probability measure by $\Theta_n.$ Recall that $\mathscr{H}^\alpha$ denotes the $\alpha-$dimensional Hausdorff measure. 

\begin{thm}[Mattila \cite{MR3617376}, Theorem 7.4] \thlabel{Mattila}
Suppose $0 < s < n, \  0 < t < n, \  s+t > n$ and $t > (n+1)/2$. If $A$ and $B$ are Borel sets in $\R^n$ with $\mathscr{H}^s(A) > 0$ and $\mathscr{H}^t(B) > 0$, then for $\Theta_n$ almost all $g \in O(n)$,
\be \label{dimension-inequality}
\mathcal{L}^n(\{z \in \R^n: \dim_H (A \cap (\tau_z \circ g)(B)) \geq s + t - n\}) > 0.
\ee
\end{thm}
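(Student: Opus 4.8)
The plan is to establish the dimension lower bound by the potential-theoretic (energy) method: I will produce, for $\Theta_n$-almost every $g$ and a positive-measure family of translations $z$, a nonzero Radon measure supported on the intersection $A\cap(\tau_z\circ g)(B)$ whose Riesz $\sigma$-energy $I_\sigma(\lambda)=\iint|x-y|^{-\sigma}\,d\lambda(x)\,d\lambda(y)$ is finite for every $\sigma<s+t-n$. Since a nonzero measure of finite $\sigma$-energy forces its support to have Hausdorff dimension at least $\sigma$, this immediately gives \eqref{dimension-inequality}.

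First I would invoke Frostman's lemma: from $\mathscr{H}^s(A)>0$ and $\mathscr{H}^t(B)>0$ one obtains compactly supported measures $\mu\in\mathcal{M}(A)$, $\nu\in\mathcal{M}(B)$ with $\mu(B(x,r))\lesssim r^{s}$ and $\nu(B(x,r))\lesssim r^{t}$, so that the energies $I_{s'}(\mu)$ and $I_{t'}(\nu)$ are finite for all $s'<s$ and $t'<t$. Next, for each $g\in O(n)$ and each $z$, I would construct an intersection (sliced) measure $\lambda_{z,g}$ carried by $A\cap(\tau_z\circ g)(B)$, realized as the weak limit as $\delta\to0$ of the mollified products $(\mu*\psi_\delta)\,d\bigl((\tau_z\circ g)_{\#}\nu\bigr)$. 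Two facts must be checked about this family: that the limit exists for Lebesgue-almost every $z$, and a Fubini/coarea identity showing that $\int_{\R^n}\|\lambda_{z,g}\|\,dz$ is a positive constant multiple of $\|\mu\|\,\|\nu\|$ for each $g$, so that $\lambda_{z,g}\neq 0$ on a set of $z$ of positive measure.

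The crux is the averaged energy estimate, namely bounding
\[
\int_{O(n)}\int_{\R^n} I_\sigma(\lambda_{z,g})\,dz\,d\Theta_n(g)
\]
and showing it is finite for every $\sigma<s+t-n$. Passing to the Fourier side via $I_\sigma(\lambda)=c(n,\sigma)\int|\hat\lambda(\xi)|^2|\xi|^{\sigma-n}\,d\xi$, the $z$-integration is handled by Plancherel (it decouples the transforms into a product of $|\hat\mu|^2$ and $|\hat\nu_g|^2$), while the $g$-integration converts $|\hat\nu_g|^2=|\hat\nu(g^{-1}\,\cdot\,)|^2$ into the spherical average $S_\nu(r)=\int_{S^{n-1}}|\hat\nu(r\theta)|^2\,d\theta$. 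The problem is thereby reduced to the convergence of a single integral in which $\int|\hat\mu(\xi)|^2|\xi|^{s-n}\,d\xi\approx I_s(\mu)<\infty$ is balanced against the decay of $S_\nu$.

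The main obstacle—and the reason for the hypothesis $t>(n+1)/2$—is exactly the decay of this spherical average. I would invoke Mattila's spherical-average estimate, which yields $S_\nu(r)\lesssim I_t(\nu)\,r^{-\beta}$ with an exponent $\beta$ strong enough to push the reduced integral down to the threshold $\sigma=s+t-n$; it is precisely in the range $t>(n+1)/2$ that the available decay suffices, and $s+t>n$ is what makes the target exponent positive. Substituting this bound together with $I_s(\mu)<\infty$ establishes finiteness of the averaged energy for all $\sigma<s+t-n$. Combining this with the positivity of the total mass, for $\Theta_n$-almost every $g$ there is a positive-measure set of $z$ on which $\lambda_{z,g}\neq 0$ and, intersecting over a countable sequence $\sigma_k\uparrow s+t-n$, $I_{\sigma_k}(\lambda_{z,g})<\infty$ for all $k$. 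Hence $\dim_H\bigl(A\cap(\tau_z\circ g)(B)\bigr)\ge\dim_H(\spt\lambda_{z,g})\ge s+t-n$ on that set, which is \eqref{dimension-inequality}.
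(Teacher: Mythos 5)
This theorem is not proved in the paper at all---it is quoted as an external result from Mattila's book (\cite{MR3617376}, Theorem 7.4) and used as a black box, so there is no internal proof to compare against. Your outline (Frostman measures from $\mathscr{H}^s(A)>0$ and $\mathscr{H}^t(B)>0$, intersection measures $\lambda_{z,g}$ as weak limits of mollified products, the averaged energy estimate with Plancherel decoupling in $z$ and spherical averaging in $g$, and the spherical-average decay estimate whose validity is exactly what forces $t>(n+1)/2$) faithfully reproduces the argument given in that cited source, so it is correct and follows essentially the same approach as the reference the paper relies on.
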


\begin{remark}
An earlier version of this paper used Theorem 8.3 of \cite{MR3236784} (8.2 in earlier editions), which is similar to \thref{Mattila} except that it omits the hypothesis of positive Hausdorff measure on $A$ and $B$. However, it turns out that this result is incorrect; a counterexample is given in \cite{BDN}, which also corrects an application of the result to Falconer's distance set problem for polygonal norms.
\end{remark}

\section{Proof of Theorem}

The following proposition demonstrates that the property of being conformally removable is a local one. Given a compact set $E$, we will use this proposition to construct the desired subset $E'$.

\begin{prop}[Younsi \cite{MR3552551}, Proposition 11]\thlabel{locality}
The following are equivalent: 
\begin{enumerate}
\item For any open set $U$ with $E \subset U$, every homeomorphism  \\ $f:U \to f(U)$ that is conformal on $U \backslash E$ is actually conformal on the whole open set $U$. 
\item $E$ is conformally removable.
\end{enumerate}
\end{prop}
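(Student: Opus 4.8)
The equivalence has one trivial direction and one substantial one. For $(1) \Rightarrow (2)$ I would simply take $U = \hat{\C}$: any homeomorphism $f \colon \hat{\C} \to \hat{\C}$ that is conformal off $E$ is in particular a homeomorphism of the open set $U = \hat{\C} \supseteq E$ that is conformal on $U \setminus E$, so $(1)$ forces it to be conformal on all of $\hat{\C}$, which is exactly $(2)$.

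The content is in $(2) \Rightarrow (1)$, where one must globalize a map defined only on $U$. Assume $E$ is removable and let $f \colon U \to f(U)$ be a homeomorphism conformal on $U \setminus E$. Since $E$ is compact and $U$ open, I would first fix a relatively compact neighbourhood $V$ of $E$ bounded by finitely many disjoint analytic Jordan curves, with $E \subset V$ and $\overline{V} \subset U$; then $f$ is conformal on a full neighbourhood of $\partial V$, so $f|_{\partial V}$ is a real-analytic, hence quasisymmetric, correspondence onto the analytic curves $f(\partial V)$. Because analytic Jordan curves are quasicircles, this boundary correspondence extends, component by component, to a quasiconformal homeomorphism $g$ of $\hat{\C} \setminus \overline{V}$ onto $\hat{\C} \setminus \overline{f(V)}$. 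Gluing along $\partial V$ yields a homeomorphism $F \colon \hat{\C} \to \hat{\C}$ with $F = f$ on $\overline{V}$ and $F = g$ on $\hat{\C}\setminus V$; it is conformal on $V \setminus E$ but only quasiconformal on the complementary region.

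The final step removes the unwanted nonconformality on the complementary region by a quasiconformal surgery that keeps the exceptional set equal to $E$. Let $\nu$ be the Beltrami coefficient that is $0$ on $f(V)$ and equals the complex dilatation of $g^{-1}$ on $\hat{\C} \setminus \overline{f(V)}$; since $\|\nu\|_\infty < 1$, the measurable Riemann mapping theorem produces a quasiconformal $\Psi \colon \hat{\C} \to \hat{\C}$ with dilatation $\nu$, and by construction $\Psi$ is conformal on $f(V)$. Set $G = \Psi \circ F$. On $V \setminus E$ both factors are conformal; on $\hat{\C}\setminus\overline V$ the maps $\Psi$ and $g^{-1}$ share their dilatation, so $\Psi \circ g$ is conformal there; and since $\partial V$ is an analytic arc disjoint from $E$, $G$ is conformal across it as well. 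Hence $G$ is a homeomorphism of $\hat{\C}$ that is conformal off $E$. Now $(2)$ applies and forces $G$ to be a M\"obius transformation. Finally, on $V$ we have $f = \Psi^{-1} \circ G$, a composition of the M\"obius map $G$ with $\Psi^{-1}$, which is conformal on $G(V) = \Psi(f(V))$; thus $f$ is conformal on $V$, and since it was already conformal on $U \setminus E$ it is conformal on all of $U$.

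I expect the main obstacle to be the surgery bookkeeping in this last step. One cannot glue $f$ directly with a genuinely conformal exterior map: prescribing both the target region $\hat{\C}\setminus\overline{f(V)}$ and the boundary values $f|_{\partial V}$ over-determines a conformal welding, so the conformalization must be achieved after the fact through the measurable Riemann mapping theorem. The crucial point, and the reason one post-composes with a $\Psi$ that is conformal on the image side $f(V)$ rather than straightening on the source side, is precisely that this arrangement leaves the exceptional set of $G$ equal to $E$ itself, so that the hypothesis $(2)$ can be invoked for the very same set $E$ rather than for some quasiconformal distortion of it.
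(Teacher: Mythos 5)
A preliminary remark: this paper does not prove the proposition at all; it quotes it from Younsi \cite{MR3552551}, so there is no in-paper proof to compare against, and your attempt has to be judged against the standard argument. Your architecture is indeed the standard one: $(1)\Rightarrow(2)$ by taking $U=\hat{\C}$; and for $(2)\Rightarrow(1)$, a collar $V$ around $E$ with nice boundary, a quasiconformal extension of $f|_{\overline{V}}$ to the sphere, a correction via the measurable Riemann mapping theorem performed on the image side, and then an application of the removability of $E$ to the corrected map $G$. Your closing observation --- that the straightening must happen on the image side precisely so that the exceptional set of $G$ stays equal to $E$ --- is exactly the right key point.

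There is, however, a genuine gap in the gluing step. You claim that the boundary correspondence $f|_{\partial V}$ extends, component by component, to a quasiconformal homeomorphism $g$ of $\hat{\C}\setminus\overline{V}$ onto $\hat{\C}\setminus\overline{f(V)}$. When $V$ is disconnected (which is unavoidable when $E$ is disconnected), no such $g$ need exist, because $f$ is only defined on $U$ and can permute the boundary curves of $V$ in a way incompatible with the complementary component structure; then $f|_{\overline{V}}$ admits no homeomorphic extension to $\hat{\C}$ whatsoever, quasiconformal or otherwise. Concretely, let $E=\{|z|=1\}\cup\{0\}$, let $U$ be the disjoint union of a thin annulus around the circle and a small disk around $0$, and let $f(z)=1/z$ on the annular part and $f(z)=z$ on the disk part; this $f$ is a homeomorphism of $U$ that is conformal on $U\setminus E$ (indeed on all of $U$). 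Take $V=\{1-\delta<|z|<1+\delta\}\cup\{|z|<\delta\}$ for small $\delta$. The component $M=\{\delta<|z|<1-\delta\}$ of $\hat{\C}\setminus\overline{V}$ has boundary $\{|z|=\delta\}\cup\{|z|=1-\delta\}$, whose image under $f$ is $\{|z|=\delta\}\cup\{|z|=1/(1-\delta)\}$; but the two components of $\hat{\C}\setminus\overline{f(V)}$ are $\{\delta<|z|<1/(1+\delta)\}$ and $\{|z|>1/(1-\delta)\}\cup\{\infty\}$, and neither has that set as its boundary. Any global homeomorphism $F$ agreeing with $f$ on $\overline{V}$ would have to carry $M$ onto a complementary component of $\overline{f(V)}$ with boundary $f(\partial M)$, so no such $F$ exists, and your surgery never gets off the ground.

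The repair is to localize the whole argument: run it separately on each connected component $V_j$ of $V$. When $\overline{V_j}$ is connected, every component of $\hat{\C}\setminus\overline{V_j}$ is a Jordan domain bounded by exactly one boundary curve (Jordan curve theorem plus connectedness, or an Euler characteristic count), and likewise for $\overline{f(V_j)}$; the curves and complementary Jordan domains then match up under $f$, and your curve-by-curve quasisymmetric extension goes through verbatim. The resulting sphere homeomorphism $G_j=\Psi_j\circ F_j$ is then conformal off $E\cap V_j$, which is a compact subset of $E$ and hence itself conformally removable --- this is immediate from the definition and is a fact the paper itself uses in the proof of its main theorem. So each $G_j$ is M\"obius, $f$ is conformal on each $V_j$, and hence on $U$. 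Two minor further points: smooth boundary curves (obtained from Sard's theorem applied to a cutoff function) are easier to produce than analytic ones and serve equally well, both for the quasicircle extension and for conformality of $G_j$ across $\partial V_j$; and one should discard the components of $V$ that miss $E$ and note, by compactness of $E$, that only finitely many remain.
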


We are now ready to begin proving our main theorem.

\begin{lemma}\thlabel{annuli}
Consider a compact set $E \subset \C$ with positive Hausdorff dimension and any positive sequence $d_n$ that is strictly increasing to $d=\dim_H E$. Then there exists a sequence of nested and concentric open squares $S_n$ such that the annuli $R_n=S_n \setminus S_{n+1}$ satisfy \be
\mathscr{H}^{d_n}(R_n \cap E) > 0.
\ee
\end{lemma}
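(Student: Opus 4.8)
The plan is to fix a single center $p$ for all the squares, build them shrinking down to $p$, and choose the side lengths one at a time so that each annulus swallows positive $\mathscr{H}^{d_n}$-measure. The engine of the argument is the standard fact that for $s < \dim_H A$ one has $\mathscr{H}^s(A) = \infty$; applied to the sets $E \cap S_n$ this will force enough mass into each annulus, provided the center is chosen well.

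First I would locate a good center. I claim there is a point $p \in E$ with $\dim_H(E \cap B(p,r)) = d$ for every $r > 0$. If not, then every $p \in E$ admits a radius $r_p$ with $\dim_H(E \cap B(p,r_p)) < d$. The balls $B(p,r_p)$ cover the compact set $E$, so finitely many $B(p_i,r_{p_i})$ do; since the Hausdorff dimension of a finite union is the maximum of the dimensions, this gives $\dim_H E < d$, a contradiction. Because an open square $S_\ell$ of side $\ell$ centered at $p$ satisfies $B(p,\ell/2) \subset S_\ell \subset B(p,\ell/\sqrt{2})$, the full-dimension property transfers to squares: $\dim_H(E \cap S_\ell) = d$ for every $\ell > 0$. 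As each $d_n < d$, this yields $\mathscr{H}^{d_n}(E \cap S_\ell) = \infty$ for all $\ell > 0$ and all $n$.

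Next I would build the squares inductively. Pick any $\ell_1 > 0$ and set $S_1 = S_{\ell_1}$. Suppose $S_n = S_{\ell_n}$ has been chosen. As $\ell \downarrow 0$ the Borel sets $(S_{\ell_n} \setminus S_\ell) \cap E$ increase to $(S_{\ell_n} \setminus \{p\}) \cap E$, whose $\mathscr{H}^{d_n}$-measure equals $\mathscr{H}^{d_n}(E \cap S_{\ell_n}) = \infty$, since deleting the single point $p$ costs nothing when $d_n > 0$. By continuity of $\mathscr{H}^{d_n}$ from below, $\mathscr{H}^{d_n}\big((S_{\ell_n} \setminus S_\ell) \cap E\big) \to \infty$ as $\ell \downarrow 0$, so I may choose $\ell_{n+1} < \min(\ell_n, 1/n)$ with $\mathscr{H}^{d_n}\big((S_{\ell_n} \setminus S_{\ell_{n+1}}) \cap E\big) > 0$ and set $S_{n+1} = S_{\ell_{n+1}}$. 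Then $R_n = S_n \setminus S_{n+1}$ satisfies $\mathscr{H}^{d_n}(R_n \cap E) > 0$, and the $S_n$ are nested, concentric, and shrink to $p$.

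The one delicate point — and the reason I route the argument through continuity from below rather than a naive subtraction $\mathscr{H}^{d_n}(S_n \cap E) - \mathscr{H}^{d_n}(S_{n+1} \cap E)$ — is that all of these measures are infinite, so I cannot simply declare the annulus to be "what is left over." Instead, the full-dimension-at-every-scale property of $p$ guarantees infinite mass in each $S_{\ell_n}$, and continuity from below converts this into infinite, hence positive, mass in a suitably thin annulus. The main conceptual obstacle is therefore producing the accumulation point $p$ of full local dimension; once it is in hand, the inductive choice of side lengths is routine.
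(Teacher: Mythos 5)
Your proof is correct, and it follows the paper's overall skeleton — the same compactness argument produces a point $p \in E$ of full local dimension, and the squares are then chosen inductively around $p$ — but the engine that puts positive measure into each annulus is genuinely different. The paper invokes Frostman's lemma: since $\dim_H(S_n \cap E) = d > d_n$, there is a measure $\mu$ supported in $S_n \cap E$ with $\mu(B(x,r)) \leq r^{d_n}$; because such a measure cannot charge the single point $p$, some concentric subsquare $S_{n+1}$ leaves positive $\mu$-mass in the annulus, and the mass distribution principle converts this into $\mathscr{H}^{d_n}(R_n \cap E) > 0$. You instead work with the Hausdorff measure directly: $d_n < \dim_H(S_n \cap E)$ forces $\mathscr{H}^{d_n}(S_n \cap E) = \infty$, deleting the point $p$ costs nothing since $d_n > 0$, and continuity from below of $\mathscr{H}^{d_n}$ (legitimate here, as the sets are Borel and hence measurable for the metric outer measure $\mathscr{H}^{d_n}$) lets you shrink $S_{n+1}$ until the annulus captures positive — in fact arbitrarily large — measure. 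Your route is more elementary, avoiding Frostman's lemma (a nontrivial result the paper cites from Mattila) and the implicit mass distribution principle; its one delicate point, which you correctly identify and handle, is that the infinite total mass rules out the naive subtraction argument and the continuity-from-above argument that makes the paper's finite-measure version immediate. The paper's route, in exchange, reduces the "thin annulus" step to a triviality about finite measures. Both are complete; yours even yields the slightly stronger conclusion $\mathscr{H}^{d_n}(R_n \cap E) = \infty$ if the annuli are chosen appropriately, though only positivity is needed downstream.
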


\begin{proof}
There exists some point $p \in E$ such that for all $\varepsilon,$ \be \dim_H(B(p, \varepsilon) \cap E) = d. \ee If not, then every point in $E$ would be contained in an open subset of $E$ with dimension strictly less than $d$. Since $E$ is compact, we can take a finite subcover of these sets, and it would follow that the dimension of $E$ is strictly less than $d$, which is a contradiction. 

Let $S_1$ be any open square centered at $p$. Suppose we have already been given an open square $S_n$ centered at $p.$ By the choice of $p$, \be \dim_H(S_n \cap E) = d > d_n. \ee By Frostman's lemma (See \cite{MR3617376}, Theorem 2.7 or \cite{MR3616046}, Lemma 3.1.1 for the statement and proof of this lemma), there is a Frostman measure $\mu$ with support in $S_n \cap E$ such that $\mu(B(x,r)) \leq r^{d_n}.$ Since a Frostman measure cannot assign positive measure to the single point $p$, there must exist some open square $S_{n+1} \subset S_n$ centered at $p$ such that $\mu(S_n \backslash S_{n+1}) > 0.$ Then the annulus $R_n= S_n \backslash S_{n+1} $ satisfies $\mathscr{H}^{d_n}(R_n \cap E) > 0$. 
\end{proof}

For the convenience of the reader, we restate the main theorem, \thref{main1}
\begin{thm}[Main Theorem] \thlabel{main}
Given an arbitrary compact set $E \subset \hat{\C}$, there exists a compact set $E' \subset E$ that is conformally removable such that $\dim_H E' = \dim_H E$.
\end{thm}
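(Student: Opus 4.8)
The plan is to build $E'$ as a countable union of small removable pieces inside $E$, one in each annulus produced by \thref{annuli}, together with their single accumulation point, and then to establish global removability through the locality principle \thref{locality}. Two elementary facts drive everything. First, \emph{any compact subset of a conformally removable set is conformally removable}: if $F\subseteq K$ with $K$ removable and $f$ is a homeomorphism of $\hat{\C}$ conformal on $\hat{\C}\setminus F\supseteq\hat{\C}\setminus K$, then $f$ is conformal on $\hat{\C}\setminus K$, hence on all of $\hat{\C}$. Second, removability is invariant under Möbius maps, in particular under similarities $z\mapsto az+b$; so every scaled, rotated, and translated copy of a Cantor set $C_\alpha$ is removable by \thref{Cantor-removable}. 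Combining the two, for any similarity image $\tilde F$ of some $C_\alpha$ the intersection $E\cap\tilde F$ is automatically removable, being a compact subset of the removable set $\tilde F$.

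We may assume $0<d:=\dim_H E\le 2$, since if $d=0$ any single point $p\in E$ is a removable subset of the same dimension (and if $E=\emptyset$ there is nothing to prove); after deleting one point we may also assume $E\subset\C$ without changing $\dim_H E$. Fix a sequence $d_n\nearrow d$ with $0<d_n<d$ and apply \thref{annuli} to obtain nested concentric squares $S_n$ centered at a point $p\in E$ whose annuli $R_n=S_n\setminus S_{n+1}$ satisfy $\mathscr{H}^{d_n}(R_n\cap E)>0$. By countable subadditivity of $\mathscr{H}^{d_n}$ we may choose, inside each $R_n$, an open ball $B_n$ with $\mathscr{H}^{d_n}(E\cap B_n)>0$ and with $2B_n$ compactly contained in the open annulus $R_n$; since the $R_n$ are disjoint and shrink to $p$, the closures $\overline{2B_n}$ are pairwise disjoint and avoid $p$.

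In $R_n$ I apply \thref{Mattila} with $n=2$. Choose $t_n\in(3/2,2)$ with $t_n\to 2$ and $d_n+t_n>2$, set $\alpha_n=4^{-1/t_n}\in(0,\tfrac12)$ so that $C_{\alpha_n}$ has dimension $t_n$ and $\mathscr{H}^{t_n}(C_{\alpha_n})>0$ by \thref{positive-Hausdorff-measure}, and rescale it to a copy $B$ of diameter less than the radius of $B_n$. Taking $A=E\cap B_n$ (so $s=d_n$), the hypotheses $0<s,t<2$, $s+t>2$, $t>3/2$ all hold, so for almost every rotation $g$ there is a positive-measure set of translations $z$ with $\dim_H\big(A\cap(\tau_z\circ g)(B)\big)\ge d_n+t_n-2$. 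Any such intersection is nonempty, so the small-diameter translate meets $B_n$ and hence lies in $2B_n\subset R_n$; fix one such placement $\tilde F_n$. Then $P_n:=E\cap\tilde F_n$ is removable, is contained in $2B_n\subset R_n$, and satisfies $\dim_H P_n\ge d_n+t_n-2$. Define $E'=\{p\}\cup\bigcup_n P_n$. Since $P_n\subset 2B_n$ with $\overline{2B_n}\to p$, the set $E'$ is compact and $E'\subseteq E$; as a countable union, $\dim_H E'=\sup_n\dim_H P_n\ge\sup_n(d_n+t_n-2)=d$, while $E'\subseteq E$ gives $\dim_H E'\le d$, so $\dim_H E'=d$.

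It remains to prove $E'$ is removable, for which I verify condition (1) of \thref{locality}. Let $U\supseteq E'$ be open and $f:U\to f(U)$ a homeomorphism conformal on $U\setminus E'$. Fix $x\in U\setminus\{p\}$: if $x\notin E'$ then $f$ is conformal near $x$; otherwise $x\in P_n\subset 2B_n$ for a unique $n$, and since $P_n\subseteq E'\subseteq U$ the open set $U_n:=U\cap 2B_n$ contains all of $P_n$, with $U_n\setminus P_n\subseteq U\setminus E'$ because the remaining pieces and $p$ lie outside $2B_n$. Thus $f$ restricts to a homeomorphism on $U_n$ that is conformal off $P_n$, and removability of $P_n$ together with \thref{locality} makes $f$ conformal on $U_n$, hence near $x$. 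Therefore $f$ is conformal on $U\setminus\{p\}$; being a homeomorphism it is bounded near $p$, so Riemann's removable singularity theorem extends it holomorphically across $p$, and an injective holomorphic map has nonvanishing derivative, so $f$ is conformal on $U$. By \thref{locality}, $E'$ is removable. I expect this final step to be the main obstacle: each $P_n$ is removable but removability is not preserved under arbitrary countable unions, so the geometric separation of the pieces into disjoint regions $2B_n$ accumulating only at the single point $p$ is precisely what makes the locality argument succeed, with the isolated accumulation point disposed of separately by the removable-singularity theorem.
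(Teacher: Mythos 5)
Your proposal follows the paper's strategy almost step for step --- the annuli of \thref{annuli}, the Cantor sets of Section \ref{Section:John} placed by \thref{Mattila}, removability of the union of pieces via \thref{locality}, with the accumulation point $p$ handled separately (the paper cites removability of a point, you cite Riemann's removable singularity theorem; same thing) --- but your separation mechanism has a genuine gap. You claim that by countable subadditivity there is a ball $B_n$ with $\mathscr{H}^{d_n}(E\cap B_n)>0$ and $2B_n$ compactly contained in ``the open annulus $R_n$.'' But $R_n=S_n\setminus S_{n+1}$ is \emph{not} open: it contains $\partial S_{n+1}$, and the covering argument behind your claim only reaches the part of $E\cap R_n$ lying in the interior $S_n\setminus\overline{S_{n+1}}$. \thref{annuli}, used as a black box, does not prevent all of the mass from sitting on $E\cap\partial S_{n+1}$. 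Concretely, let $E=\{p\}\cup\bigcup_k \partial Q_k$, where the $Q_k$ are concentric squares shrinking to $p$; then $\dim_H E=1$ and $p$ has full local dimension, and the squares $S_{n+1}=\mathrm{int}(Q_n)$ are a legitimate output of \thref{annuli}, since $E\cap R_n=\partial Q_n=\partial S_{n+1}$ has infinite $\mathscr{H}^{d_n}$-measure (as $d_n<1$). For this choice $E\cap\mathrm{int}(R_n)=\emptyset$, so no admissible ball $B_n$ exists. This matters because the pairwise disjointness of the sets $2B_n$ is exactly what powers your locality argument: without some separation device the union of the removable pieces $P_n$ need not be removable (it is an open problem whether even the union of two overlapping removable sets is removable), so the step cannot simply be waved through.

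The gap is local and fixable, and the paper's own proof shows one way: place copies only in the \emph{even} annuli $R_{2n}$, apply \thref{Mattila} directly to $A=R_{2n}\cap E$ (this needs only $\mathscr{H}^{d_{2n}}(R_{2n}\cap E)>0$, no openness), and rescale each copy to have diameter less than half the width of the two adjacent odd annuli; a copy meeting $R_{2n}$ then cannot touch a copy meeting $R_{2m}$ for $m\neq n$, so the disjoint neighborhoods needed for \thref{locality} exist. Alternatively, you can keep your ball device by requiring $2B_n$ to lie in the open set $S_{2n}\setminus\overline{S_{2n+2}}$ (every point of $R_{2n}$ is interior to it, and these sets are pairwise disjoint over even indices), or you can strengthen \thref{annuli} --- its Frostman-measure proof permits choosing $S_{n+1}$ with $\mu(\partial S_{n+1})=0$ --- so that the open annuli $S_n\setminus\overline{S_{n+1}}$ carry positive measure. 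Everything else in your write-up (the verification of Mattila's hypotheses, the nonemptiness and placement of the intersections, the dimension count by countable stability, the locality argument at the pieces, and the removable-singularity argument at $p$) is correct; note only that both you and the paper tacitly use that the squares $S_n$ shrink to $p$ (for compactness of $E'$ and so that the pieces accumulate only at $p$), which again comes from the proof of \thref{annuli} rather than from its statement.
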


\begin{proof}
We can assume $E \subset \C$ since if $E$ is the whole sphere then we can just replace $E$ by any compact subset lying in $\C$ with Hausdorff dimension $2$. Let $d = \dim_H E.$ If $d=0$ then we can choose $E'$ to be any point in $E$, and we are done since a single point is conformally removable and has Hausdorff dimension zero. Otherwise, choose a positive sequence $d_n$ that is strictly increasing to $d$, and construct a sequence of annuli $R_n$ as in \thref{annuli} with center $p \in E$. Denote the width of $R_n$ by $q_n.$ Our goal is to place a conformally removable Cantor set in every other annulus so that the copies do not overlap, and so that they intersect $E$ with increasing dimension. Let $b_n$ be a sequence such that $2 - d_n < b_n < 2,$ $b_n > \frac{3}{2},$ and such that $b_n \to 2$. For each $n \in \N$, let $G_{2n}$ be a conformally removable Cantor set with Hausdorff dimension $b_{2n}$ as constructed in Section \ref{Section:John}, re-scaled so that its diameter is smaller than the minimum of $\frac{q_{2n+1}}{2}$ and $\frac{q_{2n-1}}{2}$. We have by construction that \be \mathscr{H}^{d_{2n}}(R_{2n} \cap E) > 0. \ee We also have that \be \mathscr{H}^{b_{2n}}(G_{2n}) > 0 \ee by \thref{positive-Hausdorff-measure}. Finally, we have that $d_{2n} + b_{2n} > 2,$ and that $b_{2n} > \frac{3}{2}$. Therefore, by \thref{Mattila}, there exists some $z \in \R^2$ and some $g \in O(2)$ such  that 
\be \label{dimension-inequality-annuli}
\dim_H(R_{2n} \cap E \cap (\tau_z \circ g)(G_{2n})) \geq d_{2n} +  b_{2n} - 2.
\ee
Denote $ (\tau_z \circ g)(G_{2n})$ by $G'_{2n}$ (See Figure \ref{fig:Annuli}). The set $G'_{2n}$ must overlap with the annulus $R_{2n},$ and its diameter is smaller than half the width of either of the two neighboring odd-numbered annuli. Therefore, it will not overlap with $G'_{2m}$ for any $m \neq n$.

\begin{figure}
\centering
\def\svgscale{.5}
\begingroup%
  \makeatletter%
  \providecommand\color[2][]{%
    \errmessage{(Inkscape) Color is used for the text in Inkscape, but the package 'color.sty' is not loaded}%
    \renewcommand\color[2][]{}%
  }%
  \providecommand\transparent[1]{%
    \errmessage{(Inkscape) Transparency is used (non-zero) for the text in Inkscape, but the package 'transparent.sty' is not loaded}%
    \renewcommand\transparent[1]{}%
  }%
  \providecommand\rotatebox[2]{#2}%
  \newcommand*\fsize{\dimexpr\f@size pt\relax}%
  \newcommand*\lineheight[1]{\fontsize{\fsize}{#1\fsize}\selectfont}%
  \ifx\svgwidth\undefined%
    \setlength{\unitlength}{544.15484557bp}%
    \ifx\svgscale\undefined%
      \relax%
    \else%
      \setlength{\unitlength}{\unitlength * \real{\svgscale}}%
    \fi%
  \else%
    \setlength{\unitlength}{\svgwidth}%
  \fi%
  \global\let\svgwidth\undefined%
  \global\let\svgscale\undefined%
  \makeatother%
  \begin{picture}(1,0.94256316)%
    \lineheight{1}%
    \setlength\tabcolsep{0pt}%
    \put(0,0){\includegraphics[width=\unitlength,page=1]{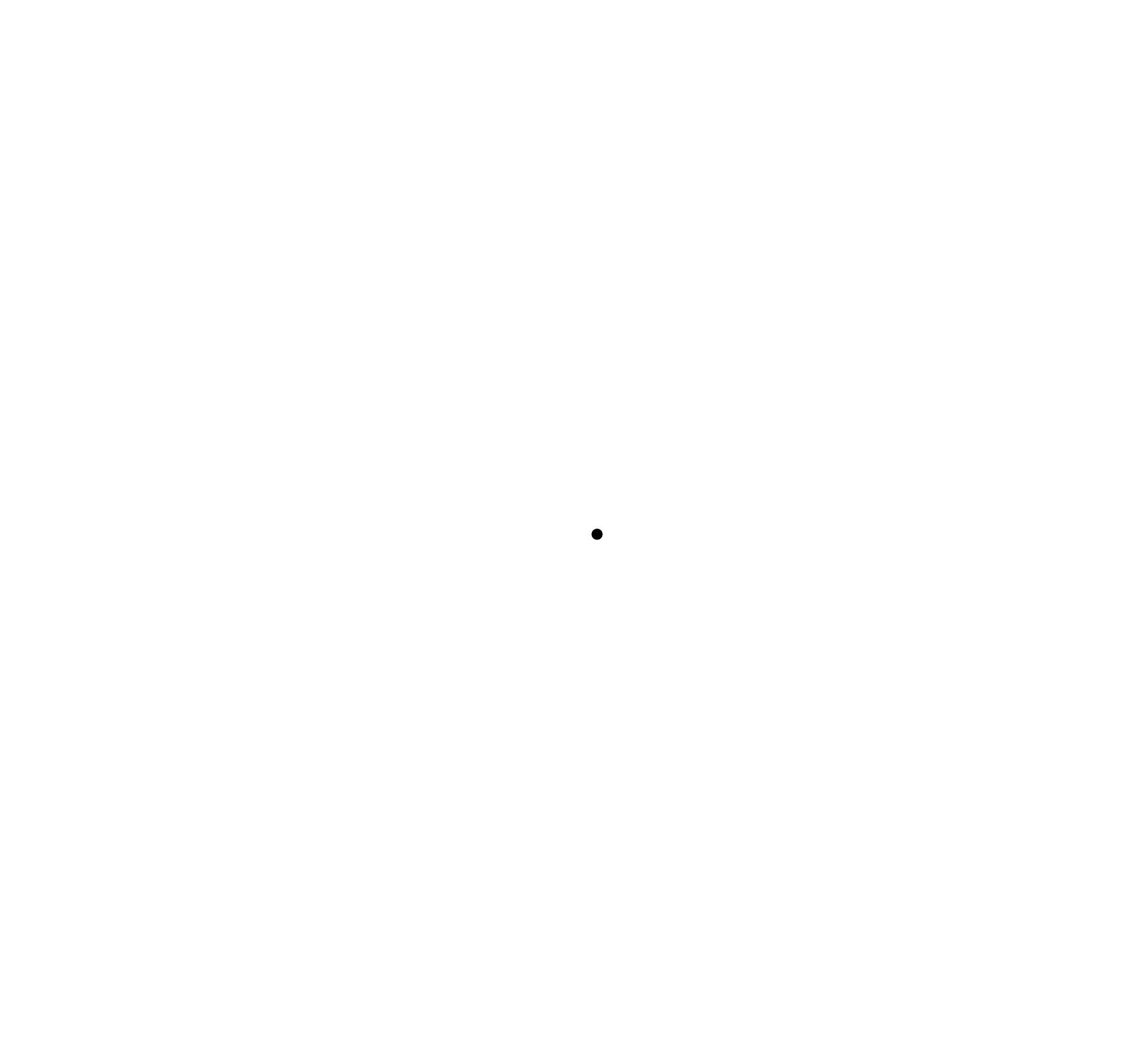}}%
    \put(0.53564465,0.45711261){\color[rgb]{0,0,0}\makebox(0,0)[lt]{\lineheight{1.25}\smash{\begin{tabular}[t]{l}p\end{tabular}}}}%
    \put(0,0){\includegraphics[width=\unitlength,page=2]{annuli3.pdf}}%
    \put(-0.00288578,0.43338254){\color[rgb]{0,0,0}\makebox(0,0)[lt]{\lineheight{1.25}\smash{\begin{tabular}[t]{l}E\end{tabular}}}}%
    \put(0,0){\includegraphics[width=\unitlength,page=3]{annuli3.pdf}}%
  \end{picture}%
\endgroup%

\caption{Annuli centered at $p \in E$ with the sets $G'_{2n}$ placed in every other annulus and overlapping $E$} 
\label{fig:Annuli}
\end{figure}

Let 
\be 
G= \overline{\bigcup_{n =1}^{\infty}G'_{2n}}  = \bigcup_{n =1}^{\infty}G'_{2n} \cup \{p\}.
\ee
$G$ is clearly compact. Suppose that we are given a homeomorphism $f: \hat{\C} \to \hat{\C}$ that is conformal on $\hat{\C} \backslash G$. By construction, we can find disjoint neighborhoods $U_n$ around each $G_{2n}$. Then $f$ is conformal on $U_n \backslash G_{2n}$, and it follows from \thref{locality} and the fact that $G_{2n}$ is conformally removable that $f$ is actually conformal on $U_n$. Therefore, we have shown that $f$ is in fact conformal everywhere on $G$ with the possible exception of the point $p$. But a single point is conformally removable, and therefore $G$ is conformally removable.  

Let $E' = G \cap E$. It follows directly from the definition of conformal removability that a compact subset of a conformally removable set is also conformally removable. Therefore, $E' \subset G$ is conformally removable. It follows from \eqref{dimension-inequality-annuli} that
\be
\dim_H E' \geq d_{2n} + b_{2n} - 2
\ee
for all $n$. Since \be \lim_ { n \to \infty} d_{2n} + b_{2n} - 2 = d = \dim_H E, \ee  we have \be \dim_H E' \geq  \dim_H E. \ee Since $E' \subset E$, we conclude that $\dim_H E' = \dim_H E$ as desired.
\end{proof}

\newpage

\bibliographystyle{plain}
\bibliography{Removability.bib}

\begin{thebibliography}{1}

\bibitem{MR1953266}
Hiroaki Aikawa, Torbj\"{o}rn Lundh, and Tomohiko Mizutani.
\newblock Martin boundary of a fractal domain.
\newblock {\em Potential Anal.}, 18(4):311--357, 2003.

\bibitem{MR3616046}
Christopher~J. Bishop and Yuval Peres.
\newblock {\em Fractals in probability and analysis}, volume 162 of {\em
  Cambridge Studies in Advanced Mathematics}.
\newblock Cambridge University Press, Cambridge, 2017.

\bibitem{BDN}
C.J. Bishop, H.~Drillick, and D.~Ntalampekos.
\newblock Falconer's {$(K,d)$} distance set conjecture can fail for strictly
  convex sets {$K$} in $\mathbb{R}^d$.
\newblock 2019.
\newblock preprint.

\bibitem{MR3236784}
Kenneth Falconer.
\newblock {\em Fractal geometry}.
\newblock John Wiley \& Sons, Ltd., Chichester, third edition, 2014.
\newblock Mathematical foundations and applications.

\bibitem{MR1315551}
Peter~W. Jones.
\newblock On removable sets for {S}obolev spaces in the plane.
\newblock In {\em Essays on {F}ourier analysis in honor of {E}lias {M}. {S}tein
  ({P}rinceton, {NJ}, 1991)}, volume~42 of {\em Princeton Math. Ser.}, pages
  250--267. Princeton Univ. Press, Princeton, NJ, 1995.

\bibitem{MR3617376}
Pertti Mattila.
\newblock {\em Fourier analysis and {H}ausdorff dimension}, volume 150 of {\em
  Cambridge Studies in Advanced Mathematics}.
\newblock Cambridge University Press, Cambridge, 2015.

\bibitem{MR3429163}
Malik Younsi.
\newblock On removable sets for holomorphic functions.
\newblock {\em EMS Surv. Math. Sci.}, 2(2):219--254, 2015.

\bibitem{MR3552551}
Malik Younsi.
\newblock Removability, rigidity of circle domains and {K}oebe's conjecture.
\newblock {\em Adv. Math.}, 303:1300--1318, 2016.

\end{thebibliography}

\end{document}